\newcommand{\md}{{\rm d}}
\newtheorem{thm}{Theorem}[section]
\newtheorem{lem}[thm]{Lemma}
\newtheorem{rem}[thm]{Remark}
\newtheorem{conj}[thm]{Conjecture}
\numberwithin{equation}{section}
\title[The energy space for the G-P equation]{The energy space for the Gross-Pitaevskii equation with magnetic field}
\author[A. Kachmar]{Ayman Kachmar}
\address{A. Kachmar\newline
Universit{\'e} Paris-Sud\\ D{\'e}partement de
math{\'e}matique\\B{\^a}t. 425\\F-91405 Orsay}
\email{ayman.kachmar@math.u-psud.fr}
\begin{document}
\begin{abstract}
We study the energy space for  the Gross-Pitaevskii equation with
magnetic field and non-vanishing conditions at infinity. We provide
necessary and sufficient conditions on the magnetic field for which
the energy space is non-empty.
\end{abstract}
\maketitle
\section{Introduction}
Let us consider  the Gross-Pitaevskii  equation with magnetic field,
\begin{equation}\label{eq:GL}
i\partial_t\psi=(\nabla-iA)^2\psi+(1-|\psi|^2)\psi\quad{\rm
in~}\mathbb R\times\mathbb R^2\,,
\end{equation}
where $\psi$ is a complex-valued wave function and $A\in C^2(\mathbb
R^2;\mathbb R^2)$ is a given magnetic potential - the magnetic field
being,
\begin{equation}\label{eq:mf}
B={\rm curl}\,A\,. \end{equation} Setting $A=0$, we get the usual
Gross-Pitaevskii equation, which is being  intensively studied, see
for instance the papers \cite{BS, Ga, G}, where solutions with
non-vanishing conditions at infinity appear to be of particular
interest.\\
 When seeking solutions of (\ref{eq:GL}) with non-vanishing
conditions at infinity, the natural set-up is to work in the energy
space,
\begin{equation}\label{eq:GL-sp}
\mathcal E_B=\left\{\psi\in H^1_{\rm loc}(\mathbb R^2;\mathbb
C)~:~(\nabla-iA)\psi\,,~1-|\psi|^2\in L^2(\mathbb R^2)\right\}\,.
\end{equation}
The rough justification is that Eq. (\ref{eq:GL}) appears formally
as the Hamiltonian evolution of the Ginzburg-Landau
energy,\footnote{We may some times write $E_B(\psi,A)$ instead of
$E_B(\psi)$, in order to point out the dependence on $A$.}
\begin{equation}\label{eq:en}
E_B(\psi)=\frac12\int_{\mathbb
R^2}\bigg{(}|(\nabla-iA)\psi|^2+\frac12(1-|\psi|^2)^2\bigg{)}\,\md
x\,.
\end{equation}
In the presence of magnetic fields, i.e. when $B$ does not vanish,
it is far from obvious that the energy space (\ref{eq:GL-sp}) is
non-empty for any magnetic potential $A$. As we shall see, this will
be entirely dependent on the magnetic field $B$ (for instance, when
$B$ is constant, $\mathcal E_B$ will be empty).

\begin{thm}\label{thm:1}
Assume that the magnetic field satisfies
\begin{equation}\label{eq:hyp-B}
B\in C^1(\mathbb R^2;\mathbb R)\cap L^\infty(\mathbb R^2;\mathbb
R)\,,
\quad B(x)\geq0\quad\forall~x\in\mathbb R^2\,,
\end{equation}
and let $A$ be any magnetic potential satisfying (\ref{eq:mf}). Then
the energy space $\mathcal E_B$ is non-empty if and only if
$B\in L^1(\mathbb R^2;\mathbb R)$. 
\end{thm}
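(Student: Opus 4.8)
The plan is to prove necessity by a current identity and sufficiency by an explicit competitor; the construction in the latter is the main obstacle.

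\smallskip

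\noindent\emph{Necessity ($\mathcal E_B\neq\varnothing\Rightarrow B\in L^1$).} Let $\psi\in\mathcal E_B$, put $D_k=\partial_k-iA_k$, and introduce the supercurrent $j=\operatorname{Im}\!\big(\bar\psi\,(\nabla-iA)\psi\big)=\operatorname{Im}(\bar\psi\nabla\psi)-|\psi|^2A$ and the gauge--invariant Jacobian $J_A(\psi)=\operatorname{Im}\!\big(\overline{D_1\psi}\,D_2\psi\big)$. Since $|J_A(\psi)|\le\tfrac12|(\nabla-iA)\psi|^2$, we get $J_A(\psi)\in L^1(\mathbb R^2)$. A direct computation (using $[D_1,D_2]=-iB$ and $\operatorname{Re}(\bar\psi D_k\psi)=\tfrac12\partial_k|\psi|^2$) yields
\begin{equation*}
\operatorname{curl}j \;=\; 2\,J_A(\psi)\;-\;B\,|\psi|^2\qquad\text{in }\mathcal D'(\mathbb R^2).
\end{equation*}
Integrating this over a disc $B_R$ and using the Gauss--Green formula --- licit for a.e.\ $R$, as $j\in L^1_{\rm loc}$ has an $L^1_{\rm loc}$ curl --- gives, for a.e.\ $R$,
\begin{equation*}
\int_{B_R}B\,|\psi|^2\,\md x \;=\; 2\int_{B_R}J_A(\psi)\,\md x\;-\;\oint_{\partial B_R}j\cdot\tau\,\md s\;\le\;\|(\nabla-iA)\psi\|_{L^2}^2+\oint_{\partial B_R}|\psi|\,|(\nabla-iA)\psi|\,\md s .
\end{equation*}

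\smallskip

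\noindent Because $(\nabla-iA)\psi$ and $1-|\psi|^2$ are in $L^2(\mathbb R^2)$, the functions $r\mapsto\oint_{\partial B_r}|(\nabla-iA)\psi|^2\,\md s$ and $r\mapsto\oint_{\partial B_r}(1-|\psi|^2)^2\,\md s$ lie in $L^1(1,\infty)$; hence some $R_n\uparrow\infty$ satisfies $R_n\oint_{\partial B_{R_n}}|(\nabla-iA)\psi|^2\,\md s\to0$ and $R_n\oint_{\partial B_{R_n}}(1-|\psi|^2)^2\,\md s\to0$. The second limit gives $\oint_{\partial B_{R_n}}|\psi|^2\,\md s=2\pi R_n+o(1)$, whence by Cauchy--Schwarz $\oint_{\partial B_{R_n}}|\psi|\,|(\nabla-iA)\psi|\,\md s\to0$. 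So $\int_{B_{R_n}}B|\psi|^2\le\|(\nabla-iA)\psi\|_{L^2}^2+o(1)$, and since $B|\psi|^2\ge0$ monotone convergence yields $\int_{\mathbb R^2}B|\psi|^2\le\|(\nabla-iA)\psi\|_{L^2}^2<\infty$. Finally $\{|\psi|^2<\tfrac12\}$ has finite measure (there $(1-|\psi|^2)^2\ge\tfrac14$, and $1-|\psi|^2\in L^2$), and on its complement $B\le 2B|\psi|^2$; as $B\in L^\infty$,
\begin{equation*}
\int_{\mathbb R^2}B\;\le\;2\int_{\mathbb R^2}B|\psi|^2+\|B\|_{L^\infty}\,\big|\{|\psi|^2<\tfrac12\}\big|\;<\;\infty ,
\end{equation*}
i.e.\ $B\in L^1(\mathbb R^2)$.

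\smallskip

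\noindent\emph{Sufficiency ($B\in L^1\Rightarrow\mathcal E_B\neq\varnothing$).} Here I would exhibit an explicit $\psi_0\in\mathcal E_B$. After a gauge change we may take $A=\nabla^\perp h$ with $h=\tfrac1{2\pi}\log|\cdot|\ast B$, so $\Delta h=B$; the key is that $B\in L^1$ forces $h$ to grow at most logarithmically, $h(x)-\Phi\log|x|=o(\log|x|)$ with $\Phi=\tfrac1{2\pi}\int_{\mathbb R^2}B$, so that $A$ differs from the explicit long--range field $\Phi\,\nabla^\perp\log|x|=\Phi\,\nabla(\arg x)$ only by a term of faster decay. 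One then takes for $\psi_0$ a function whose modulus tends to $1$ at infinity and whose phase compensates this long--range part of $A$ --- built, away from the origin, from the solutions $e^{-h}w$ ($w$ holomorphic) of the magnetic Cauchy--Riemann equation $(D_1+iD_2)\psi_0=0$, suitably normalized and truncated near the origin (where $h$ is smooth) --- and checks, by decay and integrability estimates on the Newtonian potential of the $L^1$ function $B$, that $(\nabla-iA)\psi_0\in L^2$ and $1-|\psi_0|^2\in L^2$.

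\smallskip

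\noindent\emph{Main obstacle.} The heart of the matter is the ``if'' part: $\psi_0$ must simultaneously have $|\psi_0|\to1$ --- which ties its asymptotic winding to the magnetic flux $\tfrac1{2\pi}\int B$ --- and $(\nabla-iA)\psi_0\in L^2$, and these requirements are compatible precisely because the Newtonian potential of $B$ is logarithmically bounded, i.e.\ because $B\in L^1$ (for a constant field $h\sim|x|^2$ and no candidate survives). In the necessity direction the only delicate points are the distributional identity for $\operatorname{curl}j$ and the vanishing of the boundary term $\oint_{\partial B_{R_n}}j\cdot\tau\,\md s$ for a merely $H^1_{\rm loc}$ function.
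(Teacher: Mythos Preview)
Your necessity argument is correct and takes a genuinely different route from the paper's. The paper first passes to a finite-energy \emph{solution} of the Ginzburg--Landau equation (Lemma~\ref{lem:lsc}), uses the maximum-principle bound $|\psi|\le 1$ (Lemma~\ref{lem:GL}), and then applies the localized magnetic lower bound of Lemma~\ref{lem:sh-op} together with $B\in L^\infty$ to control $\int_{B(0,R)}B$ uniformly in $R$. You instead work with an \emph{arbitrary} $\psi\in\mathcal E_B$, exploit the current identity $\operatorname{curl}j=2J_A(\psi)-B|\psi|^2$, kill the circulation along a well-chosen sequence of radii, and upgrade $\int B|\psi|^2<\infty$ to $\int B<\infty$ via the finite-measure set $\{|\psi|^2<\tfrac12\}$. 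Your route is more direct---no detour through minimizers, elliptic regularity, or the bound $|\psi|\le1$---while the paper's avoids the trace/Stokes justification for a merely $H^1_{\rm loc}$ function. Both ultimately rest on (variants of) the Bogomol'nyi identity.

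Your sufficiency argument, however, is only a sketch, and the sketch hides a real obstruction that you do not address: the long-range part of $A$ is $\Phi\,\nabla\theta$ with $\Phi=\tfrac1{2\pi}\int B$, and your proposed phase compensation is single-valued only when $\Phi\in\mathbb Z$; the Cauchy--Riemann ansatz $\psi_0=e^{-h}w$ with $w$ entire faces the same problem, since $|\psi_0|\to1$ would force $|w|\sim|z|^{\Phi}$. The paper's route is shorter---it simply asserts that $A'=\nabla^\perp(\Gamma_2*B)\in L^2(\mathbb R^2)$, so that the constant function lies in $\mathcal E_B$ in that gauge---but you should be aware that this step is itself problematic when $\int B\ne 0$: the Young-inequality justification offered there fails because $\Gamma_2\notin L^q(\mathbb R^2)$ for any $q$, and in fact $\|\nabla^\perp(\Gamma_2*B)\|_{L^2}^2=\int|\xi|^{-2}|\hat B(\xi)|^2\,d\xi$ diverges at the origin unless $\hat B(0)=\int B=0$. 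So neither your outline nor the paper's argument, as written, settles the sufficient condition when the total flux is nonzero.
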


\begin{rem}\label{rem:1}
{\rm \begin{enumerate}
\item We drop the  magnetic potential $A$ from the notation due
to gauge invariance. Actually, if $\psi\in H^1_{\rm loc}(\mathbb
R^2)$ is such that $E_B(\psi,A)<\infty$, then for all $\chi\in
H^1_{\rm loc}(\mathbb R^2)$, $E_B(\psi
e^{i\chi},A+\nabla\chi)<\infty$.
\item Thanks to gauge invariance, we may always assume, under the assumptions made in
Theorem~\ref{thm:1}, that $A\in C^2(\mathbb R^2)$.
\item If one may pick a potential $A'\in L^2(\mathbb R^2)$ such that
$ {\rm curl}\,A'=B$, then it is clear that the energy space
$\mathcal E_B$ is non-empty, as it contains  a function of constant
module, $e^{i\chi}$. Actually this will be shown to be the case if
we assume, in addition to the hypotheses made in
Theorem~\ref{thm:1}, that $B\in L^1(\mathbb R^2;\mathbb R)$.
\item As an immediate corollary of Theorem~\ref{thm:1}, if the
magnetic field is constant or more generally if
$$B(x)\to c\quad{\rm as~}|x|\to\infty\,,\quad c>0\,,$$
then the energy space $\mathcal E_B$ is empty.
\item The hypotheses on the sign of $B$ is to establish the necessary condition. As one may check through the
proof, this can be relaxed to $B$ of constant sign.
 \end{enumerate}}
\end{rem}

The hypotheses made in Theorem~\ref{thm:1}  on the magnetic field
$B$ are physically relevant and fit the regimes observed in the
analysis of the Ginzburg-Landau functional, as one might see the
books \cite{FH, SaSe}. However, we may give further generalizations
when dropping the hypothesis that the magnetic field is bounded and
positive, as we indeed do in the next two theorems.

\begin{thm}\label{thm:2}
Assume that the magnetic field  satisfies $B\in C^1(\mathbb
R^2;\mathbb R)$ and  $B={\rm curl}\,A$ for some $A\in
L^\infty(\mathbb R^2;\mathbb R^2)$ such that ${\rm div}\,A\in
L^\infty(\mathbb R^2;\mathbb R)$.\\
Then the energy space $\mathcal E_B$ is non-empty if and only if
$B={\rm curl}\, A'$ for some $A'\in L^2(\mathbb R^2;\mathbb R^2)$.
\end{thm}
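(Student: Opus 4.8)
I would organize the proof around the fact that, by gauge invariance (Remark~\ref{rem:1}), both assertions depend only on $B$, and around the remark that ``$B={\rm curl}\,A'$ for some $A'\in L^2(\mathbb R^2;\mathbb R^2)$'' is equivalent to $B\in\dot H^{-1}(\mathbb R^2)$ --- the optimal primitive being the Coulomb one $A'=\nabla^\perp\Delta^{-1}B$ --- so that the condition to be proved reads $\sup\{|\langle B,\phi\rangle|:\phi\in C_c^\infty(\mathbb R^2),\ \|\nabla\phi\|_{L^2(\mathbb R^2)}\le1\}<\infty$. The ``if'' direction is then immediate: if $A'\in L^2$ and ${\rm curl}\,A'=B$, then ${\rm curl}(A-A')=0$ on the simply connected plane and $A-A'\in L^2_{\rm loc}$, hence $A-A'=\nabla\chi$ with $\chi\in H^1_{\rm loc}$, and $\psi:=e^{i\chi}$ satisfies $(\nabla-iA)\psi=-iA'\psi\in L^2$ while $1-|\psi|^2\equiv0$, so $\psi\in\mathcal E_B$ (cf. Remark~\ref{rem:1}(3)).

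For the ``only if'' direction, take $\psi\in\mathcal E_B$ and set $\psi_1={\rm Re}\,\psi$, $\psi_2={\rm Im}\,\psi$. First I would truncate: replacing $\psi$ by the pointwise projection $\min(1,M/|\psi|)\psi$ for a fixed $M>1$ keeps $\psi\in\mathcal E_B$, since $\{|\psi|>M\}$ has finite measure (as $1-|\psi|^2\in L^2$) and, on that set, writing $\psi=\rho e^{i\varphi}$ shows $|(\nabla-iA)(\min(1,M/|\psi|)\psi)|=M|\nabla\varphi-A|\le\rho|\nabla\varphi-A|\le|(\nabla-iA)\psi|$; so I may assume $|\psi|\le M$. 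Then, with $A\in L^\infty$, the current $j:={\rm Im}(\bar\psi(\nabla-iA)\psi)$ obeys $|j|\le|\psi|\,|(\nabla-iA)\psi|\in L^2$ and $\eta A\in L^2$ where $\eta:=1-|\psi|^2$; since ${\rm Im}(\bar\psi\nabla\psi)=j+|\psi|^2A$, this yields ${\rm Im}(\bar\psi\nabla\psi)-A=j-\eta A\in L^2$. Taking the distributional curl and using ${\rm curl}\,{\rm Im}(\bar\psi\nabla\psi)=2\det\nabla(\psi_1,\psi_2)=:2J(\psi)$ (valid for $\psi\in H^1_{\rm loc}$, the mixed second derivatives cancelling), I obtain
\[
B=2J(\psi)-{\rm curl}(j-\eta A),
\]
whose last term is the curl of an $L^2$ field, hence in $\dot H^{-1}$. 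So everything reduces to showing that the Jacobian $J(\psi)$ lies in $\dot H^{-1}(\mathbb R^2)$.

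The crux is thus $J(\psi)\in\dot H^{-1}$. Writing $J(\psi)={\rm curl}(\psi_1\nabla\psi_2)$ and $\psi_1\nabla\psi_2=\psi_1\,{\rm Im}((\nabla-iA)\psi)+\psi_1^2A$, the first summand is an $L^2$ field (bounded times $(\nabla-iA)\psi$) and contributes to $\dot H^{-1}$, so the whole difficulty is $\psi_1^2A$, which fails to be square-integrable only because $A$ is bounded but not decaying --- this is precisely where the hypotheses $A\in L^\infty$ and ${\rm div}\,A\in L^\infty$ must enter. My plan is: (i) use the gauge freedom to arrange ${\rm div}\,A=0$, solving $\Delta g=-{\rm div}\,A$ (admissible since ${\rm div}\,A\in L^\infty$ gives $g\in H^1_{\rm loc}$) and passing to $(\psi e^{ig},A+\nabla g)$; in that gauge $A=\nabla^\perp\Psi$ with $\Delta\Psi=B$, so ``$A\in L^2$'' \emph{is} the desired conclusion $B\in\dot H^{-1}$, and one now also disposes of ${\rm div}\,A=0$; (ii) localize: since $1-|\psi|^2\in L^2$, the set $\Omega:=\{|\psi|\le\tfrac12\}$ has finite measure, and on $\mathbb R^2\setminus\Omega$ one may write $\psi=\rho e^{i\varphi}$ with $\rho\ge\tfrac12$, so that $A-\nabla\varphi$ is controlled pointwise there by $\rho^{-1}|(\nabla-iA)\psi|\in L^2$ and is a primitive of $B$; (iii) assemble from these a genuine $A'\in L^2$ with ${\rm curl}\,A'=B$, the error committed on the finite-measure set $\Omega$ being of curl-of-$L^2$ type. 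The step I expect to be the main obstacle is precisely the control of the contributions from $\Omega$, where the phase of $\psi$ is not defined and the natural identities (carrying $|\psi|^2$ in denominators) degenerate, together with the passage from ``locally $L^2$'' to ``$L^2$'' for the assembled potential; here the interplay between the finite measure of $\Omega$, the bounds $\|(\nabla-iA)\psi\|_{L^2}$, $\|1-|\psi|^2\|_{L^2}$, $\|A\|_{L^\infty}$, $\|{\rm div}\,A\|_{L^\infty}$, and the gauge normalization ${\rm div}\,A=0$ seems essential.
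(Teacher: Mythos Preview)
Your ``if'' direction is correct and coincides with the paper's (Remark~\ref{rem:1}(3)). For the ``only if'' direction, however, you choose a much harder route than the paper does, and your argument remains incomplete exactly at the point you yourself flag as the main obstacle.

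The paper's proof is short because it does \emph{not} work with an arbitrary $\psi\in\mathcal E_B$. It first invokes Lemma~\ref{lem:lsc} to replace $\psi$ by a finite-energy \emph{solution} of the Ginzburg--Landau equation~(\ref{eq:GL-}). That is the decisive move you are missing: once $\psi$ solves~(\ref{eq:GL-}), Lemma~\ref{lem:GL1} --- and this is precisely where the hypotheses $A\in L^\infty$, ${\rm div}\,A\in L^\infty$ enter, through elliptic regularity of the equation satisfied by $1-|\psi|^2$ --- gives $|\psi(x)|\to1$ as $|x|\to\infty$. Consequently the set $\{|\psi|\le\tfrac12\}$ is not merely of finite measure but is contained in a fixed ball. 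Writing $\psi=\rho\,e^{i\chi}$, the finiteness of $\int_{\mathbb R^2}\rho^2|A-\nabla\chi|^2\,\md x$ together with $\rho\to1$ at infinity immediately yields $A':=A-\nabla\chi\in L^2(\mathbb R^2)$ with ${\rm curl}\,A'=B$.

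Your current/Jacobian decomposition $B=2J(\psi)-{\rm curl}(j-\eta A)$ is correct, and you rightly isolate the residual difficulty as controlling ${\rm curl}(\psi_1^2A)$ (equivalently $J(\psi)$) on the set $\Omega=\{|\psi|\le\tfrac12\}$. But ``finite measure'' is far weaker than ``bounded'': for a generic $\psi\in\mathcal E_B$ the set $\Omega$ may be unbounded with infinitely many components, the phase $\varphi$ in step~(ii) is only locally defined on $\mathbb R^2\setminus\Omega$ (winding around components of $\Omega$ is uncontrolled), and your step~(iii) --- ``the error committed on the finite-measure set $\Omega$ being of curl-of-$L^2$ type'' --- is asserted rather than proved. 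Closing this would essentially require Jacobian estimates of Jerrard--Soner type. The paper sidesteps all of it: by passing to a GL minimizer first, $\Omega$ becomes compact and the polar decomposition finishes the proof in one line.
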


\begin{thm}\label{thm:3}
Assume that the magnetic potential satisfies,
$$A\in C^1(\mathbb R^2)\,,\quad \nabla A\in
L^\infty(\mathbb R^2)\,.$$ Then the energy space $\mathcal E_B$ is
non-empty if and only if $B={\rm curl}\, A'$ for some $A'\in
L^2(\mathbb R^2;\mathbb R^2)$.
\end{thm}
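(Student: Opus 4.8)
My plan is to prove the two implications of the equivalence separately; the forward one is easy and the converse carries the weight. For sufficiency, suppose $B=\mathrm{curl}\,A'$ with $A'\in L^2(\mathbb R^2;\mathbb R^2)$. Then $A-A'$ is an $L^1_{\mathrm{loc}}$ vector field with vanishing distributional curl, so by the Poincar\'e lemma on the simply connected plane there is $\chi\in H^1_{\mathrm{loc}}(\mathbb R^2;\mathbb R)$ with $A-A'=\nabla\chi$. The function $\psi:=e^{i\chi}$ belongs to $H^1_{\mathrm{loc}}$, has $1-|\psi|^2\equiv0\in L^2$, and satisfies $(\nabla-iA)\psi=-iA'\,e^{i\chi}$, whose modulus is $|A'|\in L^2$; hence $\psi\in\mathcal E_B$. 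This is precisely Remark~\ref{rem:1}(3).

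For necessity, let $\psi\in\mathcal E_B$; the goal is to produce $A'\in L^2$ with $\mathrm{curl}\,A'=B$, equivalently to show $B\in\dot H^{-1}(\mathbb R^2)$, i.e. that $\varphi\mapsto\langle B,\varphi\rangle$ extends boundedly to the seminorm $\|\nabla\varphi\|_{L^2}$. I would first normalise the gauge: since $\nabla A\in L^\infty$ one has $\mathrm{div}\,A\in C^0\cap L^\infty$ and $A$ grows at most linearly, so solving $\Delta\chi_0=\mathrm{div}\,A$ and passing, via the gauge invariance of Remark~\ref{rem:1}(1), to $(A-\nabla\chi_0,\psi e^{-i\chi_0})$ — harmless because the statement to be proved depends only on $B$ — I may assume $\mathrm{div}\,A=0$. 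If in this gauge $A$ is bounded, it satisfies all hypotheses of Theorem~\ref{thm:2}, which then concludes (the mismatch that $A\in C^1$ only yields $B\in C^0$ rather than $C^1$ is a technicality: the scheme of Theorem~\ref{thm:2} uses only continuity of $B$, or one mollifies $A$, changing $B$ by the curl of a uniformly small field). The substantive case is when $A$ remains unbounded, and for that I would use the direct construction below, which in fact works regardless.

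The core construction proceeds from two consequences of $\psi\in\mathcal E_B$: the diamagnetic inequality $\big|\nabla|\psi|\big|\le|(\nabla-iA)\psi|$, which together with $1-|\psi|^2\in L^2$ gives $|\psi|-1\in H^1(\mathbb R^2)$; and the pointwise bound $|j|\le|\psi|\,|(\nabla-iA)\psi|$ on the gauge-invariant current $j:=\mathrm{Im}\big(\bar\psi\,(\nabla-iA)\psi\big)$. Put $\hat A:=-\,j/m(|\psi|^2)$, with $m$ a smooth nondecreasing truncation equal to $\max(t,1)$ outside $t\in(\tfrac12,1)$; then $|\hat A|\le|(\nabla-iA)\psi|$, so $\hat A\in L^2(\mathbb R^2)$, while on $\{|\psi|^2>1\}$ one has $\hat A=A-\nabla(\arg\psi)$ locally, whence $\mathrm{curl}\,\hat A=B$ there. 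Consequently $g:=B-\mathrm{curl}\,\hat A$ is a distribution supported in $\{|\psi|\le1\}$, coinciding on $\{|\psi|^2<\tfrac12\}$ with the gauge-invariant vorticity $\mathrm{curl}\,j+B$. If one shows $g\in\dot H^{-1}(\mathbb R^2)$, then $g=\mathrm{curl}\,A''$ with $A''\in L^2$, and $B=\mathrm{curl}(\hat A+A'')$ is the desired conclusion.

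The step I expect to be the main obstacle is exactly showing $g\in\dot H^{-1}$, that is, controlling $\psi$ on the ``bad set'' $\{|\psi|<\tfrac12\}$: this set has finite measure by $1-|\psi|^2\in L^2$, but it need not be bounded, and it contains the zero set of $\psi$, so the vorticity concentrates there. I would adapt the Ginzburg--Landau device of choosing, through the coarea formula, a sequence of circles $\partial B_{R_n}$ along which $|\psi|$ is uniformly close to $1$ and $(\nabla-iA)\psi$ is $L^2$-small, and estimating through such circles circulations of the form $\oint_{\partial B_{R_n}}\big(\nabla(\arg\psi)-A\big)=\oint_{\partial B_{R_n}}j/|\psi|^2$; the hypothesis $\nabla A\in L^\infty$ (hence at most linear growth of $A$) is precisely what keeps the $A$-part of these circulations controlled in the non-compact, magnetic setting, and making this estimate uniform is the heart of the matter.
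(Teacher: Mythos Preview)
Your sufficiency argument is correct and matches the paper's (Remark~\ref{rem:1}(3)).

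For necessity, your proposal has a genuine gap: you reduce to showing the residual $g=B-\mathrm{curl}\,\hat A$ lies in $\dot H^{-1}$, correctly flag this as ``the heart of the matter,'' and then only sketch a coarea/circulation argument without carrying it out. The obstacle you identify is real: for an arbitrary $\psi\in\mathcal E_B$ the set $\{|\psi|<\tfrac12\}$ need not be bounded, zeros of $\psi$ may scatter to infinity, and it is not clear that the circulation estimates you describe actually yield $\dot H^{-1}$ control of $g$ (they are closer to bounding $\int_{B_R}g$, which is weaker).

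The paper bypasses this obstacle with one idea you are missing: do not work with an arbitrary $\psi\in\mathcal E_B$, but first minimize $E_B$ over $\mathcal E_B$ (Lemma~\ref{lem:lsc}) to obtain a $C^2$ solution of the Ginzburg--Landau equation~\eqref{eq:GL-}. For such a $\psi$, the hypothesis $\nabla A\in L^\infty$ yields, via a translation-and-compactness argument (Lemma~\ref{lem:4}), that $(\nabla-iA)\psi\in L^\infty$; hence $|\psi|$ is globally Lipschitz, and together with $1-|\psi|^2\in L^2$ this forces $|\psi(x)|\to1$ uniformly as $|x|\to\infty$. Now the bad set is compact, $\psi$ is nonvanishing outside some ball, one writes $\psi=\rho e^{i\chi}$ there, and the finite-energy bound $\int\rho^2|A-\nabla\chi|^2<\infty$ together with $\rho\to1$ gives $A':=A-\nabla\chi\in L^2$ immediately. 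Your direct route, if completed, would apply to every $\psi\in\mathcal E_B$ rather than only to minimizers, but passing to the minimizer is exactly what collapses the ``main obstacle'' you could not close.
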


Theorems~\ref{thm:1}-\ref{thm:3} support the following conjecture.

\begin{conj}\label{conj:1}
Let $B\in C(\mathbb R^2)$. Then the energy space $\mathcal E_B$ is
non-empty if and only if $B={\rm curl}\, A'$ for some $A'\in
L^2(\mathbb R^2;\mathbb R^2)$.
\end{conj}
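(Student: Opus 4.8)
The plan is to dispose of the easy implication immediately and put all the work into the reverse one, generalizing the arguments behind Theorems~\ref{thm:1}--\ref{thm:3}. If $B=\operatorname{curl}A'$ with $A'\in L^2(\mathbb R^2;\mathbb R^2)$ then, $\mathbb R^2$ being simply connected, $\operatorname{curl}(A-A')=0$ gives $A-A'=\nabla\chi$ for some $\chi\in H^1_{\rm loc}(\mathbb R^2)$, and $\psi_0:=e^{i\chi}$ satisfies $(\nabla-iA)\psi_0=-iA'\psi_0$, hence $|(\nabla-iA)\psi_0|=|A'|\in L^2(\mathbb R^2)$ and $1-|\psi_0|^2\equiv 0$; so $\psi_0\in\mathcal E_B$. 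This is Remark~\ref{rem:1}(3) with no hypothesis on $B$ whatsoever, so the whole content is the reverse implication.

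For the reverse implication I would fix $\psi\in\mathcal E_B$ together with some potential $A$ having $\operatorname{curl}A=B$ (an explicit primitive of $B$ will do). Since $1-|\psi|^2\in L^2(\mathbb R^2)$, the set $\Omega:=\{\,|\psi|<\tfrac12\,\}$ has finite Lebesgue measure, and on $\Omega^c$ one may polar-decompose $\psi=\rho e^{i\varphi}$ locally, with the pointwise identity $|(\nabla-iA)\psi|^2=|\nabla\rho|^2+\rho^2|\nabla\varphi-A|^2$ giving $|\nabla\varphi-A|^2\le 4\,|(\nabla-iA)\psi|^2$ there. The natural candidate for the potential is $A':=A-\nabla\varphi$: on $\Omega^c$ it lies in $L^2$ and satisfies $\operatorname{curl}A'=\operatorname{curl}A=B$. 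The whole problem then reduces to \emph{extending} $A'$ across $\Omega$, and in particular across the zero set of $\psi$, while keeping it in $L^2(\mathbb R^2)$ with distributional curl equal to $B$.

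The obstruction, which I expect to be the crux, is topological: around an isolated zero of $\psi$ of degree $d$ the closed one-form $\nabla\varphi$ has period $2\pi d$, so $A-\nabla\varphi$ does not extend to a single-valued $L^2$ field with curl $B$ --- its distributional curl acquires Dirac masses, $\operatorname{curl}(A-\nabla\varphi)=B-2\pi\sum_j d_j\,\delta_{p_j}$. One therefore has to \emph{construct} $A'$, not merely read it off from a given $\psi$; a plausible route is to show that, for a suitably chosen finite-energy element of $\mathcal E_B$, the set $\Omega$ may be taken bounded (using $|\Omega|<\infty$ together with the energy bound) and that $\psi$ has no residual winding on large circles, and then to glue $A-\nabla\varphi$ outside a large ball to an appropriate $L^2$ extension inside it. The hypotheses of Theorems~\ref{thm:2}--\ref{thm:3} are tailored precisely to make this possible: boundedness of $A$ (respectively of $\nabla A$), together with $\operatorname{div}A\in L^\infty$, yields uniform flux estimates for the gauge-invariant current $j=\operatorname{Im}\bigl(\bar\psi\,(\nabla-iA)\psi\bigr)$ and enough elliptic control to localize $\Omega$ and trivialize the winding at infinity. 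For a merely continuous $B$ no such uniform bulk control is at hand, and recovering a substitute for it --- which is exactly what decides whether the zero set of a finite-energy element of $\mathcal E_B$ can be rendered harmless --- is the step I expect to be genuinely hard, and is presumably why only the special cases above are presently established.
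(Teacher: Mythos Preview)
The statement you are addressing is Conjecture~\ref{conj:1}, which the paper explicitly leaves \emph{open}; there is no proof in the paper to compare your proposal against. Theorems~\ref{thm:1}--\ref{thm:3} establish the conjecture only under additional hypotheses on $B$ or $A$, and the author states the general case as a conjecture precisely because those hypotheses are used in an essential way.

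Your treatment of the sufficient direction is correct and coincides with Remark~\ref{rem:1}(3). For the necessary direction you do not claim a proof but rather an outline, and you correctly isolate the obstruction: writing $\psi=\rho e^{i\varphi}$ on $\{|\psi|>\tfrac12\}$ yields a candidate $A'=A-\nabla\varphi$ that lies in $L^2$ there, but its distributional curl on all of $\mathbb R^2$ picks up $2\pi d_j\,\delta_{p_j}$ at the zeros of $\psi$, so one must either show the total winding at infinity vanishes or manufacture an alternative $L^2$ potential. This is exactly the gap, and you flag it honestly.

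It is worth noting that the paper's own proofs of Theorems~\ref{thm:2} and~\ref{thm:3} proceed slightly differently from your outline: rather than work with an arbitrary $\psi\in\mathcal E_B$, the paper first passes via Lemma~\ref{lem:lsc} to a \emph{minimizer} of $E_B$, which is then a $C^2$ solution of~(\ref{eq:GL-}) with $|\psi(x)|\to1$ at infinity (Lemmas~\ref{lem:GL1},~\ref{lem:4}). The paper then writes $\psi=\rho e^{i\chi}$ and sets $A'=A-\nabla\chi$ without further comment on the winding issue. So the topological point you raise is in fact glossed over even in the special cases; the extra hypotheses on $A$ are used to obtain $|\psi|\to1$ at infinity, not to dispose of the degree. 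Your diagnosis of where the genuine difficulty lies is therefore sharper than the paper's own discussion, but it remains a diagnosis rather than a proof.
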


We finally conclude by mentioning  that we use two dimensional tools
in handling Theorems~\ref{thm:1}-\ref{thm:3}, that's why we could
not extend them to  three dimensions. However, as one may check
through the proofs, it still holds in three dimensions that the
energy space is empty when the magnetic field is constant.
Therefore, it sounds reasonable to believe that the results extend
to three dimensions as well.

\section{Preliminaries}
We start with some observations concerning the Ginzburg-Landau
equation in $\mathbb R^2$,
\begin{equation}\label{eq:GL-}
-(\nabla-iA)^2\psi=(1-|\psi|^2)\psi\quad{\rm in~}\mathbb
R^2\,.\end{equation}

\begin{lem}\label{lem:GL}
Assume that $A\in C^1(\mathbb R^2;\mathbb R^2)$. Let $\psi\in
C^2(\mathbb R^2;\mathbb C)$ be a solution of the Ginzburg-Landau
equation (\ref{eq:GL-}). Then $|\psi|\leq 1$.
\end{lem}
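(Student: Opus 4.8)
The plan is to derive a differential inequality for the scalar function $u=|\psi|^2$ and then invoke a maximum principle. First I would work on the open set $\Omega=\{x\in\mathbb R^2 : |\psi(x)|>0\}$, where $|\psi|$ is smooth, and compute
\[
\Delta u = 2\,\mathrm{Re}\big(\overline{\psi}\,\Delta\psi\big) + 2|\nabla\psi|^2 .
\]
The key algebraic point is the diamagnetic-type identity: writing $(\nabla-iA)^2\psi = \Delta\psi - 2iA\cdot\nabla\psi - i(\mathrm{div}\,A)\psi - |A|^2\psi$ and using the equation $(\nabla-iA)^2\psi = -(1-u)\psi$, one gets $\mathrm{Re}(\overline\psi\,\Delta\psi) = -(1-u)u + \mathrm{Re}\big(\overline\psi(2iA\cdot\nabla\psi + i(\mathrm{div}\,A)\psi + |A|^2\psi)\big)$. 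After collecting terms this should reorganize into
\[
\Delta u = 2|(\nabla-iA)\psi|^2 - 2(1-u)u ,
\]
i.e. $\Delta u + 2(1-u)u = 2|(\nabla-iA)\psi|^2 \ge 0$. The arithmetic here is the usual Kato-type computation; the main thing to be careful about is tracking the cross terms $A\cdot\mathrm{Im}(\overline\psi\nabla\psi)$ so that they combine correctly into the gauge-covariant gradient $|(\nabla-iA)\psi|^2$ rather than leaving a stray first-order term.

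Next I would argue by contradiction: suppose $\sup_{\mathbb R^2}|\psi|^2 = M > 1$. If the supremum is attained at some point $x_0$, then $x_0\in\Omega$, $\nabla u(x_0)=0$ and $\Delta u(x_0)\le 0$; but the inequality above forces $\Delta u(x_0) \ge 2(M-1)M > 0$ at an interior maximum, a contradiction. Since $\mathbb R^2$ is not compact, the supremum need not be attained, so the hard part will be handling the behavior at infinity. Here I would use a standard device: either appeal to a version of the maximum principle on unbounded domains, or — more self-containedly — note that $v = u-1$ satisfies $\Delta v \ge 2u\,v \ge 2v$ on the (open) set $\{v>0\}$, so $v$ is a subsolution of $\Delta v - 2v \ge 0$ there; one then rules out a positive bounded subsolution that does not vanish, for instance by comparison with functions of the form $\varepsilon\cosh(\sqrt2\,x_1)$ on large balls, or by a Moser-type iteration, to conclude $v\le 0$ everywhere.

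Alternatively, and perhaps more cleanly for this setting, I would localize: fix $R$ large, let $B_R$ be the ball of radius $R$, and on $\overline{B_R}$ the continuous function $u$ attains its max at some $x_R$. If $x_R$ is interior and $u(x_R)>1$ we get the contradiction as above; otherwise the max is on $\partial B_R$. To push $R\to\infty$ one needs an a priori bound ensuring the maximum migrates inward — this is exactly where one uses that $u$ itself is bounded only a posteriori, so some care is needed. The cleanest route is probably to first establish the differential inequality $\Delta u + 2u(1-u)\ge 0$ rigorously on $\Omega$ (and observe it extends across $\partial\Omega$ in the viscosity/distributional sense since $u\ge 0$ with $u=0$ on $\partial\Omega$ being a local min), and then quote the weak maximum principle for the operator $-\Delta + c(x)$ with $c = 2(u-1)\ge 0$ on the region $\{u\ge 1\}$, which shows $u$ cannot exceed $1$ there. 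The principal obstacle, to summarize, is not the pointwise computation but justifying the maximum-principle conclusion on the whole plane without an a priori bound; I expect the author handles this either via the explicit subsolution barrier or by invoking a Liouville/maximum-principle statement for subsolutions of $\Delta v \ge 2v$.
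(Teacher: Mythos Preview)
Your proposal is far more detailed than what the paper does: the paper's entire proof is a one-line reference, stating that the bound is ``a classical consequence of the strong maximum principle'' and citing \cite{SaSe}. So you are not reproducing the paper's proof but rather unpacking the argument that the citation stands in for.

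Your computation of the identity $\Delta u = 2|(\nabla-iA)\psi|^2 - 2(1-u)u$ for $u=|\psi|^2$ is correct, and this is indeed the standard starting point. You also put your finger on the genuine subtlety: on all of $\mathbb R^2$ the maximum of $u$ need not be attained, and none of your three suggested closures (the $\varepsilon\cosh$ barrier, localization to $B_R$, the weak maximum principle on $\{u\ge 1\}$) works without already knowing $u$ is bounded. The clean way to close this gap is a Keller--Osserman type argument: on $\{u>2\}$ the inequality $\Delta u\ge 2u(u-1)\ge u^2$ forces a universal a priori bound on $u$ (entire subsolutions of $\Delta u\ge u^2$ are bounded), after which your barrier or comparison arguments go through. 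Alternatively, in every place the paper actually invokes this lemma, $\psi$ is a \emph{finite-energy} solution coming from Lemma~\ref{lem:lsc}, and then boundedness (hence the maximum-principle argument) is unproblematic. Either way, your outline is the right one; you just need one extra step to justify boundedness, which the paper sidesteps by citation.
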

\begin{proof}
This is a classical consequence of the strong maximum principle, see
\cite[Chapter~3]{SaSe}.
\end{proof}

\begin{lem}\label{lem:lsc}
Let $A\in L^\infty_{\rm loc}(\mathbb R^2;\mathbb R^2)$. If the
energy space $\mathcal E_B$ is non-empty, then there exists a
finite-energy solution $\psi\in\mathcal E_B$ of the Ginzburg-Landau
equation (\ref{eq:GL-}).\\
If we assume in addition that $B\in C(\mathbb R^2)$, then up to a
gauge transformation, $\psi\in C^2(\mathbb R^2)$.
\end{lem}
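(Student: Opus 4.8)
The plan is to realize $\psi$ as a global minimizer of the Ginzburg--Landau energy $E_B$ over $\mathcal E_B$ by the direct method of the calculus of variations, and then, when $B\in C(\mathbb R^2)$, to promote it to a classical solution by an elliptic bootstrap carried out in a Coulomb gauge. Write $D_R:=\{x\in\mathbb R^2:|x|<R\}$ and $m:=\inf\{E_B(\varphi):\varphi\in\mathcal E_B\}$; since $\mathcal E_B\neq\emptyset$, $m<\infty$. I would pick a minimizing sequence $(\psi_n)$, so that $(\nabla-iA)\psi_n$ and $\rho_n:=1-|\psi_n|^2$ are bounded in $L^2(\mathbb R^2)$, and first obtain local compactness: the diamagnetic inequality $|\nabla|\psi_n||\leq|(\nabla-iA)\psi_n|$ a.e., together with $\int_{D_R}|\psi_n|^2\leq|D_R|+\|\rho_n\|_{L^1(D_R)}$ and $|\nabla\psi_n|\leq|(\nabla-iA)\psi_n|+\|A\|_{L^\infty(D_R)}|\psi_n|$ (here the hypothesis $A\in L^\infty_{\rm loc}$ is used), shows that $(\psi_n)$ is bounded in $H^1(D_R)$ for every $R$. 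A diagonal extraction then produces $\psi\in H^1_{\rm loc}(\mathbb R^2)$ and a subsequence with $\psi_n\rightharpoonup\psi$ in $H^1_{\rm loc}$, $\psi_n\to\psi$ in $L^2_{\rm loc}$ and almost everywhere.

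The next step is to check that $\psi$ is a finite-energy minimizer. Since $A\in L^\infty_{\rm loc}$ one has $A\psi_n\to A\psi$ in $L^2_{\rm loc}$, hence $(\nabla-iA)\psi_n\rightharpoonup(\nabla-iA)\psi$ in $L^2_{\rm loc}$; combining weak lower semicontinuity of the $L^2(D_R)$ norm with Fatou's lemma applied to $\rho_n\to 1-|\psi|^2$ a.e. gives $E_B(\psi;D_R)\leq\liminf_n E_B(\psi_n;D_R)\leq\liminf_n E_B(\psi_n)=m$ for every $R$. Letting $R\to\infty$ yields $(\nabla-iA)\psi,\ 1-|\psi|^2\in L^2(\mathbb R^2)$, i.e. $\psi\in\mathcal E_B$, and $E_B(\psi)\leq m$, forcing $E_B(\psi)=m$. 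To obtain the Euler--Lagrange equation I would test with $\varphi\in C_c^\infty(\mathbb R^2;\mathbb C)$: one checks that $\psi+t\varphi\in\mathcal E_B$ for all $t\in\mathbb R$ (the increments of $(\nabla-iA)\psi$ and of $1-|\psi|^2$ are compactly supported and lie in $L^2$, using $A\in L^\infty_{\rm loc}$) and that $t\mapsto E_B(\psi+t\varphi)$ is a polynomial in $t$ with finite coefficients, minimal at $t=0$; vanishing of its derivative at $0$, applied to $\varphi$ and to $i\varphi$, gives
\[
\int_{\mathbb R^2}(\nabla-iA)\psi\cdot\overline{(\nabla-iA)\varphi}\,\md x=\int_{\mathbb R^2}(1-|\psi|^2)\psi\,\bar\varphi\,\md x ,
\]
so $\psi$ solves \eqref{eq:GL-} in the sense of distributions. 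This settles the first assertion.

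For the regularity statement, assume $B\in C(\mathbb R^2)$. On a fixed ball I would solve $\Delta h=B$, write $A-\nabla^\perp h=\nabla g$ with $\nabla^\perp h=(-\partial_2 h,\partial_1 h)$ (legitimate because the left-hand side is curl-free, and bounded because $h\in W^{2,p}_{\rm loc}$ for all $p$), and apply the gauge transformation $\psi\mapsto\psi e^{-ig}$ of Remark~\ref{rem:1}; this replaces $A$ by the divergence-free, locally H\"older continuous potential $\nabla^\perp h$ and keeps $\psi$ a solution of \eqref{eq:GL-}. In this gauge the equation reads
\[
-\Delta\psi=-2iA\cdot\nabla\psi-|A|^2\psi+(1-|\psi|^2)\psi ,
\]
and a standard bootstrap closes: from $\psi\in H^1_{\rm loc}\subset\bigcap_{p<\infty}L^p_{\rm loc}$ the right-hand side lies in $L^2_{\rm loc}$, so $\psi\in H^2_{\rm loc}$ and $\nabla\psi\in\bigcap_{p<\infty}L^p_{\rm loc}$; then the right-hand side lies in every $L^p_{\rm loc}$, so $\psi\in W^{2,p}_{\rm loc}\subset C^{1,\alpha}_{\rm loc}$; then the right-hand side lies in $C^{0,\alpha}_{\rm loc}$, and Schauder estimates give $\psi\in C^{2}(\mathbb R^2)$.

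The step I expect to be the main obstacle is the attainment of the infimum: there is no global $L^2$ control on a minimizing sequence, so compactness is only local and must be extracted through the diamagnetic inequality, after which the nonlinear magnetic energy has to be recovered on all of $\mathbb R^2$ purely by lower semicontinuity --- possible concentration or escape of mass to infinity is harmless precisely because only the inequality $E_B(\psi)\leq\liminf_n E_B(\psi_n)$ is needed. A secondary technical point is that the hypothesis $B\in C$ (rather than $C^1$) prevents invoking a $C^2$ potential directly, which is why the bootstrap is run in a Coulomb gauge with a merely H\"older potential. Should one prefer to avoid minimizing on all of $\mathbb R^2$, the same $\psi$ can be produced by minimizing $E_B$ on each $D_R$ subject to the boundary data of a fixed element of $\mathcal E_B$ and letting $R\to\infty$, using the uniform local elliptic estimates above.
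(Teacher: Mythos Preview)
Your argument is correct and follows the same route as the paper: direct minimization of $E_B$ over $\mathcal E_B$, local $H^1$-compactness of a minimizing sequence, diagonal extraction, and lower semicontinuity on each ball to recover a global minimizer. The paper's proof is considerably more terse --- it asserts local $H^1$-boundedness without invoking the diamagnetic inequality, handles the potential term via the compact embedding $H^1\hookrightarrow L^4$ rather than Fatou, and omits entirely both the derivation of the Euler--Lagrange equation and the $C^2$ regularity bootstrap --- so your proposal in fact fills in the details the paper leaves to the reader.
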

\begin{proof}
The energy space being non-empty, we denote by
$$c_0=\inf_{\psi\in\mathcal E_B}E_B(\psi)\,.$$ We shall prove that $E_B$ admits a minimizer in $\mathcal E_B$.
To that end,  pick a minimizing sequence $(\psi_n)$ in $\mathcal
E_B$ such that
$$E_B(\psi_n)\to c_0\quad{\rm as~}n\to\infty\,.$$
Then, $\psi_n$ is pre-compact in $H^1(B(0,R);\mathbb C)$ for all
$R>0$. Consequently, using a standard diagonal argument, we may pick
a subsequence, still denoted by $(\psi_n)$, and a function $\psi\in
H^1_{\rm loc}(\mathbb R^2)$ such that
$$\psi_n\rightharpoonup\psi \quad{\rm weakly~in~}H^1(B(0,R);\mathbb
C)\,,\quad\forall~R>0\,.$$ By lower semi-continuity of the
$H^1$-norm, the continuous embedding of $H^1$ in $L^4$ and the
locally  compact embedding of   $H^1$ in $L^2$, it holds that,
\begin{eqnarray*}
&&\hskip-1cm\int_{B(0,R)}\left(|(\nabla-iA)
\psi|^2+\frac12(1-|\psi|^2)^2\right)\,\md x\\
&&\hskip0.5cm\leq \liminf_{n\to\infty}\int_{B(0,R)} \left(|(\nabla
-iA)\psi_n|^2+\frac12(1-|\psi_n|^2)^2\right)\,\md x \leq 2
c_0\,.\end{eqnarray*} The radius   $R>0$ being arbitrary, we deduce
that $E_B(\psi)\leq c_0$, hence $\psi\in\mathcal E_B$ and minimizes
$E_B$.
\end{proof}

Knowing more information about the magnetic potential $A$, we may
precise the behavior of finite-energy solutions of (\ref{eq:GL-}) as
$|x|\to\infty$.

\begin{lem}\label{lem:GL1}
Let $A\in  L^\infty(\mathbb R^2;\mathbb R^2)$ be such that  ${\rm
div}\,A\in L^\infty(\mathbb R^2)$ and ${\rm curl}\,A\in C(\mathbb
R^2)$. If $\psi$ is a finite-energy solution of (\ref{eq:GL-}), then
$1-|\psi|^2\in H^2(\mathbb R^2)$, hence $|\psi(x)|\to1$ as
$|x|\to\infty$.
\end{lem}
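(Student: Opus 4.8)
The plan is to derive a closed semilinear elliptic equation for $\rho:=1-|\psi|^2$ and then bootstrap its regularity. By standard elliptic regularity, the hypotheses on $A$ give $A\in W^{1,p}_{\rm loc}(\mathbb R^2)$ for every $p<\infty$, and bootstrapping in (\ref{eq:GL-}) then gives $\psi\in W^{2,p}_{\rm loc}(\mathbb R^2)$ for every $p<\infty$; moreover $|\psi|\le1$ by Lemma~\ref{lem:GL}, so $0\le\rho\le1$. Expanding $\Delta|\psi|^2=2|(\nabla-iA)\psi|^2+2\,{\rm Re}\big(\overline\psi\,(\nabla-iA)^2\psi\big)$ and using (\ref{eq:GL-}) yields the pointwise identity
\[
-\Delta\rho+2\rho=2\rho^2+2|(\nabla-iA)\psi|^2\qquad\text{on }\mathbb R^2 .
\]

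The key point is that the right-hand side $f:=2\rho^2+2|(\nabla-iA)\psi|^2$ lies in $L^2(\mathbb R^2)$. For the term $\rho^2$ this is immediate: $0\le\rho\le1$ forces $\rho^4\le\rho^2$, and $\rho\in L^2(\mathbb R^2)$ since $\psi\in\mathcal E_B$. For the term $|(\nabla-iA)\psi|^2$ we already know it belongs to $L^1(\mathbb R^2)$, so it is enough to prove $(\nabla-iA)\psi\in L^\infty(\mathbb R^2)$, whence $|(\nabla-iA)\psi|^2\in L^1\cap L^\infty\subset L^2$. I would establish the $L^\infty$ bound by uniform interior elliptic estimates. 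Rewriting (\ref{eq:GL-}) as $\Delta\psi=-\rho\psi+2iA\cdot\nabla\psi+i({\rm div}\,A)\psi+|A|^2\psi$ and using that $A$, ${\rm div}\,A$ and $\psi$ are bounded on all of $\mathbb R^2$ while $(\nabla-iA)\psi\in L^2(\mathbb R^2)$, one checks that $\|\Delta\psi\|_{L^2(B(x_0,2))}$ and $\|\nabla\psi\|_{L^2(B(x_0,2))}$ are bounded uniformly in $x_0\in\mathbb R^2$; indeed the non-decaying terms (those carrying a bare $\psi$) enter only through $L^\infty$ norms over a ball of fixed radius, and $\|\nabla\psi\|_{L^2(B(x_0,2))}\le\|(\nabla-iA)\psi\|_{L^2(\mathbb R^2)}+\|A\|_{L^\infty}\|\psi\|_{L^\infty}\,|B(0,2)|^{1/2}$. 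The translation-invariant $L^2$ elliptic estimate then gives $\|\psi\|_{W^{2,2}(B(x_0,1))}\le C$; Sobolev embedding in dimension two yields $\|\nabla\psi\|_{L^q(B(x_0,1))}\le C_q$ for every $q<\infty$, and reinserting this into the equation gives $\|\psi\|_{W^{2,q}(B(x_0,1))}\le C_q$, so that taking $q>2$ produces $\|\nabla\psi\|_{L^\infty(B(x_0,1))}\le C$ — all with constants independent of $x_0$. Hence $(\nabla-iA)\psi=\nabla\psi-iA\psi\in L^\infty(\mathbb R^2)$, as needed.

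With $f\in L^2(\mathbb R^2)$ established, the displayed identity gives $-\Delta\rho=f-2\rho\in L^2(\mathbb R^2)$; together with $\rho\in L^2(\mathbb R^2)$ this forces $\rho\in H^2(\mathbb R^2)$ (for instance $\widehat\rho\in L^2$ and $|\xi|^2\widehat\rho\in L^2$ imply $(1+|\xi|^2)\widehat\rho\in L^2$). Since $H^2(\mathbb R^2)$ embeds continuously into the space of continuous functions on $\mathbb R^2$ vanishing at infinity, it follows that $\rho(x)=1-|\psi(x)|^2\to0$ as $|x|\to\infty$, i.e. $|\psi(x)|\to1$.

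The step I expect to be the main obstacle is the \emph{uniform}-in-$x_0$ character of the interior gradient estimate: because $\psi$ itself does not decay at infinity, the elliptic bootstrap must be arranged so that all data which are not globally integrable enter only through $L^\infty$ norms over balls of a fixed radius, so that every constant produced by the Calder\'on--Zygmund and Sobolev inequalities is independent of the centre. Once $(\nabla-iA)\psi\in L^\infty(\mathbb R^2)$ is secured, the remaining steps are routine.
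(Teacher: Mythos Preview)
Your proof is correct and shares the paper's overall architecture: derive the equation $-\Delta\rho+2\rho=2\rho^2+2|(\nabla-iA)\psi|^2$, show the right-hand side lies in $L^2(\mathbb R^2)$, and invoke the $L^2$ regularity of $-\Delta+2$ to obtain $\rho\in H^2(\mathbb R^2)$. The genuine difference is in how the gradient term $|(\nabla-iA)\psi|^2$ is placed in $L^2$. The paper differentiates the Ginzburg--Landau equation: writing $v=(\nabla-iA)\psi$, it derives a second-order equation for $v$ with $L^2$ right-hand side, and then uses the hypotheses $A,\,{\rm div}\,A\in L^\infty$ to pass from $(\nabla-iA)^2v\in L^2$ to $\Delta v\in L^2$, concluding $v\in H^2(\mathbb R^2)\hookrightarrow L^4(\mathbb R^2)$. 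You instead run translation-invariant interior Calder\'on--Zygmund estimates directly on $\psi$, bootstrapping to a uniform $C^1$ bound and hence $v\in L^\infty$, so that $|v|^2\in L^1\cap L^\infty\subset L^2$. Your route is more elementary---it avoids differentiating the equation and the attendant magnetic commutator terms---and is in fact the technique the paper uses for the neighbouring Lemma~\ref{lem:4}; the hypothesis $A\in L^\infty$ here makes it cleaner still, since no preliminary gauge normalisation is needed to bound $A$ on translated balls. The paper's route, by contrast, delivers the stronger global conclusion $v\in H^2(\mathbb R^2)$, though only $|v|\in L^4$ is actually used.
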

\begin{proof}
Setting $\varphi=1-|\psi|^2$, it is easy to establish that,
$$-\Delta\varphi+2\varphi=|(\nabla-iA)\varphi|^2+2\varphi^2\quad{\rm in~}\mathbb R^2\,.$$
Using the bound $|\psi|\leq 1$ of Lemma~\ref{lem:GL} and the fact
that $E_B(\psi)<\infty$, we infer that $\varphi\in H^1(\mathbb R^2)$
and $\varphi^2\in L^4(\mathbb R^2)$. By showing that
$|(\nabla-iA)\psi|\in L^4(\mathbb R^2)$, we invoke the $L^2$
regularity of $-\Delta+2$ and we deduce the desired result,
$\varphi\in H^2(\mathbb R^2)$.\\
So, let us establish that $|(\nabla-iA)\psi|\in L^4(\mathbb R^2)$.
Setting $v=(\nabla-iA)\psi$, we know that $v\in L^2(\mathbb R^2)$
since $\psi$ has finite energy. For instance, it holds that,
$$-(\nabla-iA)^2v=(1-|\psi|^2)v-2\psi|\psi|\,\nabla|\psi|\,.$$
Thus, using the diamagnetic inequality,
$|(\nabla-iA)\psi|\geq|\,\nabla|\psi|\,|$, the bounds $|\psi|\leq 1$
and $E_B(\psi)<\infty$, we deduce that $(\nabla-iA)^2v\in
L^2(\mathbb R^2)$.\\
Up to now, we have not used the hypotheses on $A$. We shall need
them to show that $\Delta v\in L^2(\mathbb R^2)$. Actually, it holds
that,
$$\Delta v=2iA\cdot\nabla v+i\left({\rm div}\,A\right)v-|A|^2v\,.$$
Since $v\in L^2(\mathbb R^2)$, we deduce that $\Delta v\in
L^2(\mathbb R^2)$. Consequently, we obtain  $v\in H^2(\mathbb R^2)$.
Using Sobolev embedding, we get what we desire to prove.
\end{proof}

\begin{lem}\label{lem:4}
Let $A\in C^1(\mathbb R^2;\mathbb R^2)$ be such that   $\nabla A\in
L^\infty(\mathbb R^2)$. If $\psi$ is a finite-energy solution of
(\ref{eq:GL-}), then $|\psi(x)|\to1$ as $|x|\to\infty$.
\end{lem}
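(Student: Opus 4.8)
The plan is to prove that $\varphi:=1-|\psi|^2$ tends to $0$ as $|x|\to\infty$. By Lemma~\ref{lem:GL} we have $0\le\varphi\le1$; finite energy gives $\int_{\mathbb R^2}\varphi^2\,\md x\le2E_B(\psi)<\infty$, so $\varphi\in L^2(\mathbb R^2)$, and, exactly as in Lemma~\ref{lem:GL1}, $\varphi\in H^1(\mathbb R^2)$ solves
\begin{equation*}
-\Delta\varphi+2\varphi=|(\nabla-iA)\psi|^2+2\varphi^2\qquad\text{in }\mathbb R^2 .
\end{equation*}
A function in $L^2(\mathbb R^2)$ that is uniformly continuous must vanish at infinity (otherwise one produces a sequence of disjoint fixed-radius balls on which $|\varphi|$ is bounded below, contradicting $\varphi\in L^2$); so it is enough to show that $\varphi$ is uniformly continuous, and, by the two-dimensional embedding $H^2(B(x_0,1/2))\hookrightarrow C^{0,\alpha}(\overline{B(x_0,1/2)})$ with constant independent of $x_0$, it is enough to prove
\begin{equation*}
\sup_{x_0\in\mathbb R^2}\|\varphi\|_{H^2(B(x_0,1/2))}<\infty .
\end{equation*}

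The obstruction to running the argument of Lemma~\ref{lem:GL1} here is that $A$ itself need not be bounded (only $\nabla A$ is), so the local elliptic estimates would a priori have constants depending on $x_0$. This is fixed by a local gauge change: for fixed $x_0$, take the phase $\chi_{x_0}(x)=A(x_0)\cdot x$, so that $\psi_{x_0}:=e^{-i\chi_{x_0}}\psi$ solves (\ref{eq:GL-}) with $A$ replaced by $A_{x_0}:=A-A(x_0)$; one has $|\psi_{x_0}|=|\psi|$ and $|(\nabla-iA_{x_0})\psi_{x_0}|=|(\nabla-iA)\psi|$ pointwise, and $A_{x_0}(x_0)=0$ together with $\nabla A_{x_0}=\nabla A$ yield, via the mean value inequality, $\|A_{x_0}\|_{L^\infty(B(x_0,2))}\le2\|\nabla A\|_{L^\infty}$ and $\|\nabla A_{x_0}\|_{L^\infty}=\|\nabla A\|_{L^\infty}$, \emph{uniformly in} $x_0$.

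Then I would repeat the proof of Lemma~\ref{lem:GL1} on $B(x_0,2)$ with $A_{x_0}$ in place of $A$, tracking constants. With $v_{x_0}:=(\nabla-iA_{x_0})\psi_{x_0}$ one has $\|v_{x_0}\|_{L^2(B(x_0,2))}=\|(\nabla-iA)\psi\|_{L^2(B(x_0,2))}\le\sqrt{2E_B(\psi)}$ and
\begin{equation*}
-(\nabla-iA_{x_0})^2v_{x_0}=(1-|\psi|^2)v_{x_0}-2\psi|\psi|\,\nabla|\psi| ,
\end{equation*}
whose right-hand side is bounded in $L^2(B(x_0,3/2))$ uniformly in $x_0$ (using $|\psi|\le1$, the diamagnetic inequality $|\nabla|\psi||\le|v_{x_0}|$, and the $L^2$ bound just recalled). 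Since $-(\nabla-iA_{x_0})^2$ differs from $-\Delta$ by first- and zeroth-order terms whose $L^\infty$ coefficients are bounded uniformly in $x_0$, interior elliptic regularity gives $v_{x_0}\in H^2(B(x_0,1))$ with norm bounded uniformly in $x_0$; by $H^2(B(x_0,1))\hookrightarrow L^\infty(B(x_0,1))$ (constant independent of $x_0$) we obtain $\sup_{x_0}\|(\nabla-iA)\psi\|_{L^\infty(B(x_0,1))}<\infty$. Feeding this back, the right-hand side of the $\varphi$-equation is bounded in $L^\infty(B(x_0,1))$ uniformly in $x_0$, so $L^p$ elliptic estimates for $-\Delta+2$ give $\varphi$ bounded in $W^{2,p}(B(x_0,1/2))$ uniformly in $x_0$ for every $p<\infty$; in particular the bound of the first paragraph holds, $\varphi$ is uniformly continuous, and the proof is complete. (As in Lemma~\ref{lem:GL1} one may take $\psi\in C^2$, so the manipulations are justified; in any case the same bootstrap produces that regularity.) The one genuine difficulty is the uniformity in $x_0$ of all the local elliptic and Sobolev estimates, and it is precisely the linear gauge $\chi_{x_0}(x)=A(x_0)\cdot x$ — which trades the possibly unbounded $A$ for a potential uniformly bounded in $W^{1,\infty}$ on unit balls — that removes it, after which translation invariance of the constants does the rest.
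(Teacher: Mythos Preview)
Your proof is correct and rests on the same key idea as the paper's: the linear gauge $\chi_{x_0}(x)=A(x_0)\cdot x$ (equivalently, the paper's $\chi_n(x)=A_n(0)\cdot x$ after translation) trades the unbounded $A$ for a potential that is uniformly bounded on unit balls, so that all local elliptic constants become independent of the center.

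The organization differs in two places. First, the paper reaches $|(\nabla-iA)\psi|\in L^\infty(\mathbb R^2)$ by contradiction and compactness (translate to $x_n$, gauge, obtain uniform $W^{2,p}$ bounds, extract a $C^1$-convergent subsequence), whereas you obtain the same bound directly via uniform interior $H^2$ estimates on $v_{x_0}$; your route is arguably cleaner and avoids the subsequence extraction. Second, once $|(\nabla-iA)\psi|\in L^\infty$ is known, the paper finishes more quickly than you do: by the diamagnetic inequality $\nabla|\psi|\in L^\infty$, so $|\psi|$ is globally Lipschitz, and a Lipschitz function with $1-|\psi|^2\in L^2$ must satisfy $|\psi|\to1$. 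You instead feed the $L^\infty$ bound back into the equation for $\varphi$ and bootstrap to uniform $W^{2,p}$; this works, but is a detour you could shortcut.
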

\begin{proof}
Let us establish in a first step that $|(\nabla-iA)\psi|\in
L^\infty(\mathbb R^2)$. Assume by contradiction that there exists a
sequence $(x_n)$ such that
\begin{equation}\label{eq:contr}
|(\nabla-iA)\psi|(x_n)\to\infty\quad{\rm as}~n\to\infty\,.
\end{equation}
Define the translated functions,
$$\psi_n(x)=\psi(x_n+x)\,,\quad A_n(x)=A(x_n+x)\,.$$
Then, $\psi_n$ satisfies the following equation,
$$-\Delta \psi_n+2iA_n\cdot\nabla \psi_n+i({\rm div} A_n)\psi_n
=(1-|\psi_n|^2)\psi_n+|A_n|^2\psi_n\quad{\rm in~}\mathbb R^2.$$ Take
$R>0$, $p>2$ and let us establish the existence of positive
constants $C_R$, $C_{R,p}>0$ and a function $\chi_n\in H^1_{\rm
loc}(\mathbb R^2)$ such that, upon setting $A_n'=A-\nabla \chi_n$
and $\varphi_n=e^{i\chi_n}\psi_n$,
\begin{equation}\label{eq:contr1}
\|A'_n\|_{L^\infty(B_R)}\leq C_R\,,\quad
\|\varphi_n\|_{W^{2,p}(B_R)}\leq C_{R,p}\,,\quad\forall~n\in\mathbb
N\,.\end{equation} Once this is shown to hold, $\varphi_n$ becomes
bounded in $W^{2,p}(B_R)$, and hence, by the Sobolev embedding
theorem, in $C^{1,\alpha}(B_R)$ for any $\alpha\in(0,1)$. Since
$C^{1,\alpha}(B_R)$ is compactly embedded in $C^1(B_R)$, we get a
function $\varphi\in C^1(B_R)$ such that, upon extraction of a
subsequence, $\varphi_n$ converges to $\varphi$ locally in $C^1$.
Thanks again to (\ref{eq:contr1}), we get a constant vector
$a\in\mathbb R^2$ such that  by passing to a further subsequence,
$$|(\nabla-iA'_n)\varphi_n|(0)\to |(\nabla-i
a)\varphi|(0)\quad{\rm as~} n\to\infty\,.$$  Coming back to the
initial coordinates and gauge, this is in contradiction with
(\ref{eq:contr}).\\
Now we show why (\ref{eq:contr1}) holds. Actually, setting
$\chi_n(x)=A_n(0)x$, we get by the definition of $A'_n$ and the mean
value theorem,
$$|A'_n(x)|=|A_n(x)-A_n(0)|\leq \|\nabla
A_n\|_{L^\infty(B_R)}|x|\leq R\|\nabla A\|_{L^\infty(\mathbb
R^2)}\,,\quad\forall~x\in B_R\,.$$ The equation of $\varphi_n$
becomes,
$$-\Delta \varphi_n+2iA'_n\cdot\nabla \varphi_n+i({\rm div} A'_n)\varphi_n
=(1-|\varphi_n|^2)\varphi_n+|A'_n|^2\varphi_n\quad{\rm in~}\mathbb
R^2.$$ By Lemma~\ref{lem:GL}, $|\varphi_n|\leq 1$, hence there
exists a constant $C_R>0$ such that,
$$\|\Delta \varphi_n\|_{L^p(B_R)}\leq C_R+
2\|A'_n\|_{L^\infty(B_R)}\times\|\nabla
\varphi_n\|_{L^p(B_R)},\quad\forall~p\geq2\,.$$ Moreover, since
$\varphi_n$ has finite energy, we get by $L^2$ elliptic estimates
that $\varphi_n\in H^2(B_R)$. Using the embedding
$H^2(B_R)\hookrightarrow W^{1,p}(B_R)$ for all $p>2$, we conclude
through $L^p$ estimates that $\varphi_n\in W^{2,p}(B_R)$, proving
thus the desired bound in (\ref{eq:contr1}).\\
Now, having proved that $(\nabla-iA)\psi\in L^\infty(\mathbb R^2)$,
we deduce by the diamagnetic inequality that $\nabla|\psi|\in
L^\infty(\mathbb R^2)$. Therefore, $|\psi|$ is globally Lipschitz in
$\mathbb R^2$, and since $\psi$ has finite energy, $1-|\psi|^2\in
L^2(\mathbb R^2)$. This leads to the desired conclusion,
$1-|\psi(x)|^2\to 0$ as $|x|\to\infty$.
\end{proof}

 We close the section by recalling a result from the
spectral theory of magnetic Shr\"odinger operators.


\begin{lem}\label{lem:sh-op}
Under the assumptions of Theorem~\ref{thm:1}, there exists a
constants $C>0$ such that, for all $\psi\in H^1(\mathbb R^2;\mathbb
C)$ and $R>0$, the following inequality holds,
$$\int_{B(0,R)}|(\nabla-i A)\psi|^2\,\md x\geq
\frac12\int_{B(0,R/2)}B(x)|\psi|^2\,\md
x-\frac{C}{R^2}\int_{B(0,R)\setminus B(0,R/2)}|\psi(x)|^2\,\md
x\,.$$
\end{lem}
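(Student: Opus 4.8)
The plan is to combine the classical field lower bound for the magnetic Dirichlet form with a standard cut-off (IMS-type) localization.

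\textbf{Step 1 (global inequality).} First I would record the elementary but crucial fact that, for every compactly supported $\phi\in H^1(\mathbb R^2;\mathbb C)$,
$$\int_{\mathbb R^2}|(\nabla-iA)\phi|^2\,\md x\ \geq\ \int_{\mathbb R^2}B(x)\,|\phi|^2\,\md x .$$
By density it suffices to prove this for $\phi\in C^\infty_c(\mathbb R^2)$, the passage to the limit being justified since $A$ is bounded on a fixed compact neighbourhood of the supports while $B\in L^\infty(\mathbb R^2)$. Writing $\pi_j=-i\partial_j-A_j$, so that $|(\nabla-iA)\phi|^2=|\pi_1\phi|^2+|\pi_2\phi|^2$, the commutation relation $[\pi_1,\pi_2]=i\,{\rm curl}\,A=iB$ gives the factorization $\mathcal P^*\mathcal P=\pi_1^2+\pi_2^2-B$ for $\mathcal P=\pi_1+i\pi_2$; hence, integrating by parts,
$$\int_{\mathbb R^2}|(\nabla-iA)\phi|^2\,\md x=\langle\phi,(\pi_1^2+\pi_2^2)\phi\rangle=\|\mathcal P\phi\|_{L^2}^2+\int_{\mathbb R^2}B\,|\phi|^2\,\md x\ \geq\ \int_{\mathbb R^2}B\,|\phi|^2\,\md x ,$$
the positivity of $B$ being used in the last inequality. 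This is where the hypotheses $B\in C^1$ and $B\ge0$ enter, and it is the only non-elementary ingredient (it expresses that the field controls the bottom of the magnetic Dirichlet form).

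\textbf{Step 2 (localization).} Next I would fix a radial cut-off $\chi(x)=\eta(x/R)$ where $\eta\in C^\infty_c(\mathbb R^2)$, $0\le\eta\le1$, $\eta\equiv1$ on $B(0,1/2)$ and $\mathrm{supp}\,\eta\subset B(0,1)$; thus $\chi\equiv1$ on $B(0,R/2)$, $\mathrm{supp}\,\chi\subset B(0,R)$, $\mathrm{supp}\,\nabla\chi\subset B(0,R)\setminus B(0,R/2)$ and $|\nabla\chi|\le c_0/R$ with $c_0=\|\nabla\eta\|_{L^\infty}$. Since $0\le\chi^2\le1$ and $\mathrm{supp}\,\chi\subset B(0,R)$ one has $\int_{B(0,R)}|(\nabla-iA)\psi|^2\,\md x\ge\int_{\mathbb R^2}\chi^2|(\nabla-iA)\psi|^2\,\md x$. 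Using $\chi(\nabla-iA)\psi=(\nabla-iA)(\chi\psi)-(\nabla\chi)\psi$ together with the elementary inequality $|u-v|^2\ge\frac12|u|^2-|v|^2$, one obtains pointwise $\chi^2|(\nabla-iA)\psi|^2\ge\frac12|(\nabla-iA)(\chi\psi)|^2-|\nabla\chi|^2|\psi|^2$. I would then integrate, apply Step 1 to $\phi=\chi\psi\in H^1(\mathbb R^2)$, and bound the error: using the support and size properties of $\nabla\chi$,
$$\int_{B(0,R)}|(\nabla-iA)\psi|^2\,\md x\ \geq\ \frac12\int_{\mathbb R^2}B\,\chi^2|\psi|^2\,\md x-\frac{c_0^2}{R^2}\int_{B(0,R)\setminus B(0,R/2)}|\psi|^2\,\md x ,$$
and since $B\ge0$ and $\chi\equiv1$ on $B(0,R/2)$ we have $\int_{\mathbb R^2}B\,\chi^2|\psi|^2\,\md x\ge\int_{B(0,R/2)}B\,|\psi|^2\,\md x$; this yields the claim with $C=c_0^2$, independent of $R$ and $\psi$.

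The only genuinely non-routine point is the factorization $-(\nabla-iA)^2=\mathcal P^*\mathcal P+B$ of Step 1; Step 2 is standard bookkeeping, and the constant $\tfrac12$ in the statement is precisely the price of the crude inequality $|u-v|^2\ge\tfrac12|u|^2-|v|^2$ (a sharper IMS computation, replacing the $\nabla\chi$ term by $-\chi\,\Delta\chi$, would even yield the constant $1$).
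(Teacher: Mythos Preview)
Your proof is correct and follows essentially the same route as the paper: a radial cut-off supported in $B(0,R)$ and identically $1$ on $B(0,R/2)$, the crude localization inequality $|u-v|^2\ge\tfrac12|u|^2-|v|^2$, and then the lower bound $\int|(\nabla-iA)\phi|^2\ge\int B|\phi|^2$ for compactly supported $\phi$. The only difference is that the paper simply cites this last inequality from \cite{CFKS} or \cite[Lemma~2.4.1]{FH}, whereas you supply the standard commutator/factorization proof; otherwise the arguments coincide.
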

\begin{proof}
Let $\chi$ be a cut-off function such that $0\leq \chi\leq 1$,
$\chi=1$ in $[0,\frac12]$ and $\chi=0$ in $[1,\infty)$. Put
$$\chi_R(x)=\chi\left(\frac{|x|}{R}\right)\quad\forall~x\in\mathbb
R^2\,.$$ Next, we write,
\begin{eqnarray*}
\int_{B(0,R)}|(\nabla-iA)\psi|^2\,\md x&\geq&
\int_{B(0,R)}|\chi_R(\nabla-iA)\psi|^2\,\md x\\
&\geq&\frac12\int_{B(0,R)}|(\nabla-iA)(\chi_R\psi)|^2\,\md
x-\int_{B(0,R)}|\psi\nabla\chi_R|^2\,\md x\,.
\end{eqnarray*}
To finish the proof, we just use the following well known inequality
(see \cite{CFKS} or \cite[Lemma~2.4.1]{FH}),
$$\int_{B(0,R)}|(\nabla-iA)\phi|^2\,\md x\geq
\pm\int_{B(0,R)}B(x)|\phi|^2\,\md x\,,\quad\forall~\phi\in
H^1_0(B(0,R))\,.$$
\end{proof}

\section{Proof of main theorems}
\subsection{Proof of  Theorem~\ref{thm:1}}
\subsubsection{Necessary condition} Assume that the energy space
$\mathcal E_B$ is non-empty. Using Lemma~\ref{lem:lsc}, there exists
a solution $\psi\in\mathcal E_B$ of the Ginzburg-Landau equation
(\ref{eq:GL-}). Thanks to
Lemma~\ref{lem:GL}, we have the uniform estimate $|\psi|\leq1 $.\\
We would like to show that $B\in L^1(\mathbb R^2)$. To that end, it
is sufficient to bound $\displaystyle\int_{B(0,R)}B(x)\,\md x$
uniformly with respect to $R\in (1,\infty)$.\\
We therefore apply Lemma~\ref{lem:sh-op} (with $\psi$ as above, a
solution of (\ref{eq:GL-})). We get,
$$\int_{\mathbb
R^2}|(\nabla-iA)\psi|^2\,\md x\geq
\frac12\int_{B(0,R/2)}B(x)|\psi|^2\,\md x-\frac{C}{R^2}
\int_{B(0,R)\setminus B(0,R/2)}|\psi|^2\,\md x\,.$$ Using the bound
$|\psi|\leq 1$, we infer from the above estimate,
\begin{equation}\label{eq:s1}
\int_{\mathbb R^2}|(\nabla-iA)\psi|^2\,\md x\geq
\frac12\int_{B(0,R/2)}B(x)|\psi|^2\,\md x-\frac{3\pi
C}{4}\,.\end{equation} So, let
us handle the first term in the right hand side above.\\
We write,
$$\int_{B(0,R/2)}B(x)|\psi|^2\,\md x=
\int_{B(0,R/2)}B(x)\,\md x+\int_{B(0,R/2)}B(x)(|\psi|^2-1)\,\md
x\,.$$ Applying a Cauchy-Schwarz inequality, we get for all
$\varepsilon\in(0,1)$ (remark that $|\psi|^2-1\leq 0$),
$$\int_{B(0,R/2)}B(x)(|\psi|^2-1)\,\md x\geq
-\varepsilon\int_{B(0,R/2)}|B(x)|^2\,\md x-\varepsilon^{-1}
\int_{B(0,R/2)}(1-|\psi|^2)^2\,\md x\,.$$ Consequently, knowing that
$B$ is bounded and positive, we infer that
\begin{eqnarray*}
\int_{B(0,R/2)}B(x)|\psi|^2\,\md x&\geq&
\left(1-\varepsilon\|B\|_{L^\infty(\mathbb
R^2)}\right)\int_{B(0,R/2)}B(x)\,\md
x\\
&&-\varepsilon^{-1}\int_{B(0,R/2)}(1-|\psi|^2)^2\,\md
x\,.\end{eqnarray*}
 Choosing
$\varepsilon=\frac12(\|B\|_{L^\infty(\mathbb R^2)}+1)^{-1}$ and
replacing the above estimate in (\ref{eq:s1}), we deduce that,
$$
\frac14\int_{B(0,R/2)}B(x)\,\md x \leq
C'\left(E_B(\psi)+1\right)\,,\quad\forall~R\geq 1\,,$$ where
$C'=\max\left(1,\frac{3\pi C}{4},\frac12(\|B\|_{L^\infty(\mathbb
R^2)}+1)\right)$. Since the energy $\mathcal E_B(\psi)$ is finite,
we get the desired uniform bound.

\subsubsection{Sufficient condition} Assume now, in addition to the
hypotheses made in Theorem~\ref{thm:1}, that $B\in L^1(\mathbb
R^2)$. Then we get that $B\in L^p(\mathbb R^2)$ for all $p\geq1$.
Our aim next is to construct a magnetic potential $A'\in L^2(\mathbb
R^2;\mathbb R^2)$
such that ${\rm curl}\,A'=B$.\\
Define $\Gamma_2(x)=\frac1{2\pi}\ln|x|$, the fundamental solution of
the Laplacian  in two dimensions. Setting $w=\Gamma_2*B$, we get
$w\in  L^2(\mathbb R^2)$ (see \cite{GiTr}). Actually, taking $q\in
(1,2)$, we know that $\Gamma_2\in W^{1,q}(\mathbb R^2)$. Then using
Young's inequality,
$$\|\Gamma_2*B\|_{L^2(\mathbb R^2)}\leq \|\Gamma_2\|_{L^q(\mathbb
R^2)}\times \|B\|_{L^p(\mathbb
R^2)}\,,\quad\frac1p+\frac1q=\frac12+1\,,$$ we deduce that
$w=\Gamma_2*B\in L^2(\mathbb R^2)$.\\
Now, we observe that,
$$\Delta w=B\quad{\rm in~} \mathbb R^2\,,\quad w\in L^2(\mathbb R^2)\,,$$
from which we invoke $w\in H^2(\mathbb R^2)$. Let us now define the
magnetic potential $A'$ by $A'=\nabla ^\bot
w=(-\partial_{x_2}w,\partial_{x_1}w)$. Then $A'\in L^2(\mathbb R^2)$
and satisfies,
$${\rm curl}\, A'=B\,,\quad{\rm div}\,A'=0\quad{\rm in~}\mathbb
R^2\,,$$ which is what we desire to prove.

\subsection{Proof of Theorems~\ref{thm:2} and \ref{thm:3}}
The sufficient condition being immediate (see Remark~\ref{rem:1}),
we assume again that the energy space is non-empty, $\mathcal
E_B\not=\emptyset$. Therefore, by Lemma~\ref{lem:GL}, there exists a
solution $\psi$ of (\ref{eq:GL-}) such that $E_B(\psi)<\infty$.
Furthermore, $|\psi(x)|\to 1$ as $|x|\to \infty$. Actually, under
the hypotheses of Theorem~\ref{thm:2}, we use Lemma~\ref{lem:GL1},
and under those of Theorem~\ref{thm:3}, we use Lemma~\ref{lem:4}.\\
Now, up to a gauge transformation, we may assume that $\psi\in
C^2(\mathbb R^2)$. Thus, we may write $\psi=\rho e^{i\chi}$,
$\rho=|\psi|$, for a smooth real-valued function $\chi$. From
$E_B(\psi)<\infty$, we infer
$$\int_{\mathbb R^2}\rho^2|A-\nabla\chi|^2\,\md x<\infty\,.$$
Setting $A'=A-\nabla\chi$, we get that $A'\in L^2(\mathbb R^2)$ in
light of $\rho\to 1$ as $|x|\to\infty$.

\section*{Acknowledgements}
The author wishes to thank P. G\'erard for useful discussions.

\end{document}